\theoremstyle{plain}
\newtheorem{theorem}{Theorem}[section]
\newtheorem{corollary}[theorem]{Corollary}
\newtheorem{problem}{Problem}[section]
\begin{document}
	
\title[On a family of hyperbolic Brunnian links and their volumes]{On a family of hyperbolic Brunnian \\ links  and their volumes}

\author[D.\,D.~Repov\v{s}]{Du\v{s}an~D.~Repov\v{s}} 
\address{Faculty of Education, Faculty of Mathematics ad Physics, University of Ljubljana
\break
\&
Institute of Mathematics, Physics and Mechanics, 
Ljubljana, 1000, Slovenia
\break
 \url{https://orcid.org/0000-0002-6643-1271}}
\email{\href{mailto:dusan.repovs@guest.arnes.si}{dusan.repovs@guest.arnes.si}}
\noindent
\author[A.\,Yu.~Vesnin]{Andrei~Yu.~Vesnin} 
\address{Sobolev Institute of Mathematics, Russian Academy of Sciences, Novosibirsk, 630090
\break
 \& Regional Mathematical Center, Tomsk State University, Tomsk, 634050, Russia
 \break
 \url{https://orcid.org/0000-0001-7553-1269}}
\email{\href{mailto:vesnin@math.nsc.ru}{vesnin@math.nsc.ru}}
\thanks{D.R. was supported by the Slovenian Research and Innovation Agency (program P1-0292 and grants J1-4031, J1-4001, N1-0278, N1-0114, and N1-0083).
A.V. was supported by the Ministry of Science and Higher Education of Russia under agreement no. 075-02-2024-1437.} 
\date{}
	
\begin{abstract}  
An $n$-component link $L$ is said to be \emph{Brunnian} if it is non-trivial but every proper sublink of $L$ is trivial. The simplest and best known example of a hyperbolic Brunnian link is the 3-component link known as "Borromean rings". For $n\geq 2,$ we introduce an infinite family of $n$-component Brunnian links with positive integer parameters $Br(k_1, \ldots, k_n)$  that generalize examples constructed by Debrunner in 1964. We are interested in hyperbolic invariants of 3-manifolds $S^3 \setminus Br(k_1, \ldots, k_n)$ and
 we
obtain  upper bounds for their volumes. Our approach is based on Dehn fillings on cusped manifolds with volumes related to volumes of ideal right-angled hyperbolic antiprisms.
 \end{abstract} 
	
\subjclass[2020]{57K10, 57K32, 52B10}	
\keywords{Hyperbolic Brunnian link, Adams move, augmented link, ideal right-angled antiprism} 
	
\maketitle

\section{Introduction}

If a link $L$ in $S^3$ is nontrivial, yet every proper sublink of $L$ is trivial, we say that $L$  is \emph{Brunnian}\index{Brunnian link} (or that $L$ has the \emph{Brunnian property}). Links with this property were studied by Milnor~\cite{Mi54}
who
 called them \emph{almost trivial} links. In 1961 Debrunner~\cite{De61} renamed them 
 as
  \emph{Brunnian} links, in honor of Hermann Brunn whose early contributions~\cite{Br92} to knot theory also included
   examples of such links. In recent decades, Brunnian links have been under investigation from several points of views. In particular, we refer to Bai~\cite{Ba21}, 
   Bai and Ma~\cite{BM21},
    Bai and Wang~\cite{BW20},
     and Kanenobu~\cite{Ka84, Ka86} for hyperbolic and satellite properties of Brunnian links, 
     Lei, Wu and Zhang~\cite{LWZ} for intersecting subgroups of Brunnian link groups, and 
     Habiro and Meilhan~\cite{HM08}
      for finite-type invariants  and Milnor invariants of Brunnian links. 
      We would also like  to mention molecular Borromean rings considered by 
      Wang and Stoddart~\cite{WS17}
       and a  possible physical realization of higher-order Brunnian structures studied 
       by Baas~\cite{Ba13}.

The simplest and best known example of a 3-component Brunnian link is the 3-component link $6^3_2$ in Rolfsen's notations~\cite{Ro76} which is also known as the ''Borromean rings''\index{Borromean rings}. We denote it by $\mathcal B$. It is well-known from Thurston~\cite[Chapter~3]{Th80}
 that link $\mathcal B$ is hyperbolic, the complement $S^3 \setminus \mathcal B$ can be decomposed into two copies of an ideal right-angled octahedron, and $\operatorname{vol} (S^3 \setminus \mathcal B) = 7.327724,$ up to six digits.  

We recall that a polyhedron in a hyperbolic 3-space $\mathbb H^3$ is said to be \emph{ideal}\index{polyhedron|ideal} if all of its vertices belong to $\partial \mathbb H^3$, and
it
 is called  \emph{right-angled}\index{polyhedron|right-angled} if all of its dihedral angles are   equal to $\pi/2$. 
 By
  virtue of 
  the Andreev theorem~\cite{An70}, 
  every ideal right-angled hyperbolic polyhedron is determined by its 1-dimensional skeleton, up to an isometry of $\mathbb H^3$. An initial list of ideal right-angled hyperbolic polyhedra and their volumes was  presented by 
  Egorov and Vesnin~\cite{EV20-1, EV20-2}, 
  and  the upper volume bounds depending only of number of vertices  were obtained by 
  Alexandrov, Bogachev, Vesnin, and Egorov~\cite{ABVE}. 

In the present chapter, we introduce for every $n\geq 2,$ an infinite family of $n$-component Brunnian links 
which
 generalize examples constructed  in 1961
 by 
 Debrunner~\cite{De61}.  
 We are interested in the hyperbolic structure on the complements of these links. By using decompositions of complements of fully augmented links  into pairs of ideal right-angled polyhedra by a method from an appendix by 
 Agol and Thurston in
 Lackenby~\cite{La04}, 
 and the
 Dehn filling theorem, we provide upper bounds for volumes of such hyperbolic Brunnian links.

The chapter is organized as follows: In Section~\ref{sec2}  we 
 introduce a family of $(3n+2)$-component  links $L_n,$ for every $n \geq 2,$ and demonstrate that their complement $S^3 \setminus L_n$ can be decomposed into four ideal right-angled antiprisms $A_{2n}$. Then we 
 apply Thurston's formula 
from~\cite[Chapter 6]{Th80} 
for volumes of antiprisms to obtain a formula for $\operatorname{vol} (S^3 \setminus L_n)$ (see 
Theorem~\ref{th1}).  Next, applying Adams moves to $2n$ vertical components of $L_n$, we 
construct links $L'_n$ with $3n$ components such that $\operatorname{vol} (S^3 \setminus L'_n) = \operatorname{vol} (S^3 \setminus L_n)$. In Section~\ref{sec3}, we 
construct by Dehn fillings
 on $2n$ components of $L'_n,$ a family of $n$-component links $Br(k_1, \ldots, k_n),$ depending on
the
 filling parameters $k_1, \ldots, k_n$  (see Theorems~\ref{th3} and \ref{th5}). In particular, when all $k_i$ are equal to $1$, we get the Brunnian links from 
Debrunner~\cite{De61}, whose hyperbolicity was established by 
Bai~\cite{Ba21}.     

\section{Hyperbolic links and ideal right-angled antiprisms} \label{sec2}

Recall that a knot or a link $K \subset S^3$ is said to be \emph{hyperbolic}\index{hyperbolic link} if its complement $S^3 \setminus K$ admits a complete metric of constant curvature -1. Equivalently, the 3-manifold $S^3 \setminus K = \mathbb H^3 / G$ is hyperbolic,  where $\mathbb H^3$ is 
the
hyperbolic 3-space and $G$ is a discrete, torsion-free group of isometries, isomorphic to $\pi_1 (S^3 \setminus K)$. 

For $n\geq 2,$ let us denote a link with $3n+2$ components by $L_n,$  where each component is a circle and the components are linked in the same manner as shown in Figure~\ref{fig1} for the case $n=3$. 

\begin{figure}[h]
\begin{center} 
\scalebox{0.8}{
\begin{tikzpicture} 
\draw  [ultra thick, black]  (0,0) -- (7.5,0);
\draw  [ultra thick, black]  (0,5) -- (7.5,5);
\draw[ultra thick, black] (0,1) arc (90:270:0.5);
\draw[ultra thick, black] (0,5) arc (90:270:0.5);
\draw[ultra thick, black] (7.5,1) arc (90:-90:0.5);
\draw[ultra thick, black] (7.5,5) arc (90:-90:0.5);
\draw[ultra thick, black] (0,3) arc (90:270:0.5);
\draw  [ultra thick, black]  (0,2) -- (0.5,2);
\draw  [ultra thick, black]  (0,3) -- (0.5,3);
\draw  [ultra thick, black]  (0.8,2) -- (1.5,2);
\draw  [ultra thick, black]  (0.8,3) -- (1.5,3);
\draw[ultra thick, black] (1.5,3) arc (90:-90:0.5);
\draw  [ultra thick, black]  (0,1) -- (0.5,1);
\draw  [ultra thick, black]  (0,4) -- (0.5,4);
\draw  [ultra thick, black]  (0.8,4) -- (3.5,4);
\draw  [ultra thick, black]  (0.8,1) -- (3.5,1);
\draw[ultra thick, red]  (1,2.1)  arc (50:320: 0.2cm and 0.9cm);
\draw  [ultra thick, red]  (1,1.2) -- (1,1.8);
\draw[ultra thick, red]  (1,4.1)  arc (50:320: 0.2cm and 0.9cm);
\draw  [ultra thick, red]  (1,3.2) -- (1,3.8);
\draw[ultra thick, red]  (4,2.1)  arc (50:320: 0.2cm and 0.9cm);
\draw  [ultra thick, red]  (4,1.2) -- (4,1.8);
\draw[ultra thick, red]  (4,4.1)  arc (50:320: 0.2cm and 0.9cm);
\draw  [ultra thick, red]  (4,3.2) -- (4,3.8);
\draw[ultra thick, black] (3,3) arc (90:270:0.5);
\draw  [ultra thick, black]  (3,2) -- (3.5,2);
\draw  [ultra thick, black]  (3,3) -- (3.5,3);
\draw  [ultra thick, black]  (3.8,2) -- (4.5,2);
\draw  [ultra thick, black]  (3.8,3) -- (4.5,3);
\draw[ultra thick, black] (4.5,3) arc (90:-90:0.5);
\draw  [ultra thick, black]  (3.8,4) -- (6.5,4);
\draw  [ultra thick, black]  (3.8,1) -- (6.5,1);
\draw[ultra thick, black] (6,3) arc (90:270:0.5);
\draw  [ultra thick, black]  (6,2) -- (6.5,2);
\draw  [ultra thick, black]  (6,3) -- (6.5,3);
\draw[ultra thick, black] (7.5,3) arc (90:-90:0.5);
\draw  [ultra thick, black]  (6.8,2) -- (7.5,2);
\draw  [ultra thick, black]  (6.8,3) -- (7.5,3);
\draw  [ultra thick, black]  (6.8,4) -- (7.5,4);
\draw  [ultra thick, black]  (6.8,1) -- (7.5,1);
\draw[ultra thick, red]  (7,2.1)  arc (50:320: 0.2cm and 0.9cm);
\draw  [ultra thick, red]  (7,1.2) -- (7,1.8);
\draw[ultra thick, red]  (7,4.1)  arc (50:320: 0.2cm and 0.9cm);
\draw  [ultra thick, red]  (7,3.2) -- (7,3.8);
\end{tikzpicture}
}
\end{center}
\caption{Link $L_n$ with $3n+2$ components, case $n=3$.} \label{fig1}
\end{figure}
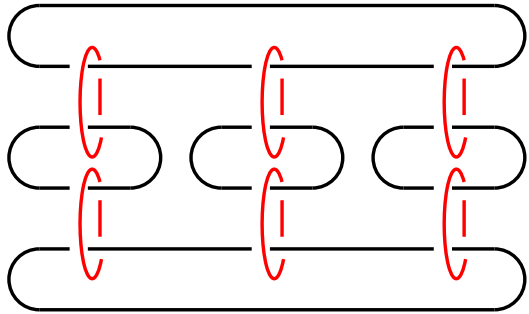
By using the
computer program 
SnapPy~\cite{Snap}, 
one can see that
the
 8-component link $L_2$ is hyperbolic and has $\operatorname{vol} (S^3 \setminus L_2) = 24.092184,$ up to six digits. To demonstrate hyperbolicity of $L_n$ for arbitrary $n \geq 2$ and find the volume formula for $S^3 \setminus L_n,$  we shall use the approach from
Lackenby~\cite{La04}
 (see 
also
Futer and Purcell~\cite{FP07}). 
In the terminology of Lackenby~\cite{La04},
 the link $L_n$ is 
  augmented,  with $2n$ "vertical"  components (colored in red in Figure~\ref{fig1}), and $S^3 \setminus L_n$ admits a decomposition into two ideal polyhedra $P_n$ and $P'_n$ with faces identified in pairs. The polyhedra $P_n$ and $P'_n$ are identical. By the construction  each vertical component of $L_n$ gives to a pair of triangles in $P_n$ with a common vertex like a bowtie, as presented in Figure~\ref{fig2} where common vertices are red.

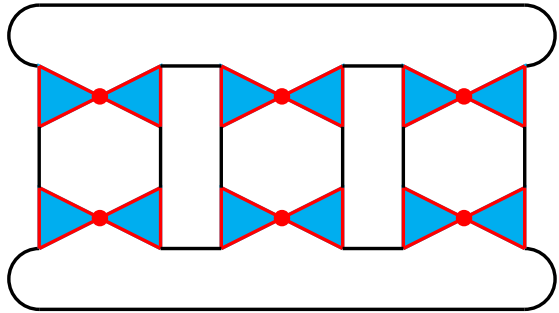
\begin{figure}[h]
\begin{center} 
\scalebox{0.8}{
\begin{tikzpicture} 
\draw  [ultra thick, black]  (0,0) -- (8,0);
\draw  [ultra thick, black]  (0,5) -- (8,5);
\draw[ultra thick, black] (0,1) arc (90:270:0.5);
\draw[ultra thick, black] (0,5) arc (90:270:0.5);
\draw[ultra thick, black] (8,1) arc (90:-90:0.5);
\draw[ultra thick, black] (8,5) arc (90:-90:0.5);
\filldraw  [ultra thick,  cyan]  (0,1) -- (0,2) -- (1,1.5) -- (0,1);
\filldraw  [ultra thick,  cyan]  (2,1) -- (2,2) -- (1,1.5) -- (2,1);
\filldraw  [ultra thick, cyan]  (0,3) -- (0,4) -- (1,3.5) -- (0,3);
\filldraw  [ultra thick, cyan]  (2,3) -- (2,4) -- (1,3.5) -- (2,3);
\filldraw  [ultra thick, cyan]  (3,1) -- (3,2) -- (4,1.5) -- (3,1);
\filldraw  [ultra thick, cyan]  (5,1) -- (5,2) -- (4,1.5) -- (5,1);
\filldraw  [ultra thick, cyan]  (3,3) -- (3,4) -- (4,3.5) -- (3,3);
\filldraw  [ultra thick, cyan]  (5,3) -- (5,4) -- (4,3.5) -- (5,3);
\filldraw  [ultra thick, cyan]  (6,1) -- (6,2) -- (7,1.5) -- (6,1);
\filldraw  [ultra thick, cyan]  (8,1) -- (8,2) -- (7,1.5) -- (8,1);
\filldraw  [ultra thick, cyan]  (6,3) -- (6,4) -- (7,3.5) -- (6,3);
\filldraw  [ultra thick, cyan]  (8,3) -- (8,4) -- (7,3.5) -- (8,3);
\draw  [ultra thick, red]  (0,1) -- (1,1.5);
\draw  [ultra thick, red]  (2,1) -- (1,1.5);
\draw  [ultra thick, red]  (0,1) -- (0,2);
\draw  [ultra thick, red]  (2,1) -- (2,2);
\draw  [ultra thick, red]  (0,2) -- (1,1.5);
\draw  [ultra thick, red]  (2,2) -- (1,1.5);
\draw  [ultra thick, red]  (0,3) -- (1,3.5);
\draw  [ultra thick, red]  (2,3) -- (1,3.5);
\draw  [ultra thick, red]  (0,3) -- (0,4);
\draw  [ultra thick, red]  (2,3) -- (2,4);
\draw  [ultra thick, red]  (0,4) -- (1,3.5);
\draw  [ultra thick, red]  (2,4) -- (1,3.5);
\draw  [ultra thick, red]  (3,1) -- (4,1.5);
\draw  [ultra thick, red]  (5,1) -- (4,1.5);
\draw  [ultra thick, red]  (3,1) -- (3,2);
\draw  [ultra thick, red]  (5,1) -- (5,2);
\draw  [ultra thick, red]  (3,2) -- (4,1.5);
\draw  [ultra thick, red]  (5,2) -- (4,1.5);
\draw  [ultra thick, red]  (3,3) -- (4,3.5);
\draw  [ultra thick, red]  (5,3) -- (4,3.5);
\draw  [ultra thick, red]  (3,3) -- (3,4);
\draw  [ultra thick, red]  (5,3) -- (5,4);
\draw  [ultra thick, red]  (3,4) -- (4,3.5);
\draw  [ultra thick, red]  (5,4) -- (4,3.5);
\draw  [ultra thick, red]  (6,1) -- (7,1.5);
\draw  [ultra thick, red]  (8,1) -- (7,1.5);
\draw  [ultra thick, red]  (6,1) -- (6,2);
\draw  [ultra thick, red]  (8,1) -- (8,2);
\draw  [ultra thick, red]  (6,2) -- (7,1.5);
\draw  [ultra thick, red]  (8,2) -- (7,1.5);
\draw  [ultra thick, red]  (6,3) -- (7,3.5);
\draw  [ultra thick, red]  (8,3) -- (7,3.5);
\draw  [ultra thick, red]  (6,3) -- (6,4);
\draw  [ultra thick, red]  (8,3) -- (8,4);
\draw  [ultra thick, red]  (6,4) -- (7,3.5);
\draw  [ultra thick, red]  (8,4) -- (7,3.5);
\draw  [ultra thick, black]  (2,1) -- (3,1);
\draw  [ultra thick, black]  (5,1) -- (6,1);
\draw  [ultra thick, black]  (2,4) -- (3,4);
\draw  [ultra thick, black]  (5,4) -- (6,4);
\draw  [ultra thick, black]  (0,2) -- (0,3);
\draw  [ultra thick, black]  (2,2) -- (2,3);
\draw  [ultra thick, black]  (3,2) -- (3,3);
\draw  [ultra thick, black]  (5,2) -- (5,3);
\draw  [ultra thick, black]  (6,2) -- (6,3);
\draw  [ultra thick, black]  (8,2) -- (8,3);
\filldraw [line width=2pt, red] (1,1.5) circle[radius=0.1cm];
\filldraw [line width=2pt, red] (4,1.5) circle[radius=0.1cm];
\filldraw [line width=2pt, red] (7,1.5) circle[radius=0.1cm];
\filldraw [line width=2pt, red] (1,3.5) circle[radius=0.1cm];
\filldraw [line width=2pt, red] (4,3.5) circle[radius=0.1cm];
\filldraw [line width=2pt, red] (7,3.5) circle[radius=0.1cm];
\end{tikzpicture}
}
\end{center}
\caption{Associating bowties to vertical components of $L_n$, case $n=3$.} \label{fig2}
\end{figure}
 
\begin{figure}[h]
\begin{center} 
\scalebox{0.8}{
\begin{tikzpicture} 
\draw  [ultra thick, black]  (-0.5,0) -- (8.5,0);
\draw  [ultra thick, black]  (-0.5,5) -- (8.5,5);
\draw[ultra thick, black] (-0.5,1) arc (90:270:0.5);
\draw[ultra thick, black] (-0.5,5) arc (90:270:0.5);
\draw[ultra thick, black] (8.5,1) arc (90:-90:0.5);
\draw[ultra thick, black] (8.5,5) arc (90:-90:0.5);
\filldraw  [ultra thick, cyan]  (-0.5,1) -- (1,1) -- (0,2.5) -- (-0.5,1);
\filldraw  [ultra thick, cyan]  (-0.5,4) -- (1,4) -- (0,2.5) -- (-0.5,4);
\filldraw  [ultra thick, cyan]  (1,1) -- (2.5,1) -- (2,2.5) -- (1,1);
\filldraw  [ultra thick, cyan]  (1,4) -- (2.5,4) -- (2,2.5) -- (1,4);
\filldraw  [ultra thick, cyan]  (2.5,1) -- (4,1) -- (3,2.5) -- (2.5,1);
\filldraw  [ultra thick, cyan]  (2.5,4) -- (4,4) -- (3,2.5) -- (2.5,4);
\filldraw  [ultra thick, cyan]  (4,1) -- (5.5,1) -- (5,2.5) -- (4,1);
\filldraw  [ultra thick, cyan]  (4,4) -- (5.5,4) -- (5,2.5) -- (4,4);
\filldraw  [ultra thick, cyan]  (5.5,1) -- (7,1) -- (6,2.5) -- (5.5,1);
\filldraw  [ultra thick, cyan]  (5.5,4) -- (7,4) -- (6,2.5) -- (5.5,4);
\filldraw  [ultra thick, cyan]  (7,1) -- (8.5,1) -- (8,2.5) -- (7,1);
\filldraw  [ultra thick, cyan]  (7,4) -- (8.5,4) -- (8,2.5) -- (7,4);
\draw  [ultra thick, red]  (-0.5,1) -- (1,1);
\draw  [ultra thick, red]  (2.5,1) -- (1,1);
\draw  [ultra thick, red]  (-0.50,1) -- (0,2.5);
\draw  [ultra thick, red]  (2.5,1) -- (2,2.5);
\draw  [ultra thick, red]  (0,2.5) -- (1,1);
\draw  [ultra thick, red]  (2,2.5) -- (1,1);
\draw  [ultra thick, red]  (0,2.5) -- (1,4);
\draw  [ultra thick, red]  (2,2.5) -- (1,4);
\draw  [ultra thick, red]  (0,2.5) -- (-0.5,4);
\draw  [ultra thick, red]  (2,2.5) -- (2.5,4);
\draw  [ultra thick, red]  (-0.5,4) -- (1,4);
\draw  [ultra thick, red]  (2.5,4) -- (1,4);
\draw  [ultra thick, red]  (2.5,1) -- (4,1);
\draw  [ultra thick, red]  (5.5,1) -- (4,1);
\draw  [ultra thick, red]  (2.5,1) -- (3,2.5);
\draw  [ultra thick, red]  (5.5,1) -- (5,2.5);
\draw  [ultra thick, red]  (3,2.5) -- (4,1);
\draw  [ultra thick, red]  (5,2.5) -- (4,1);
\draw  [ultra thick, red]  (3,2.5) -- (4,4);
\draw  [ultra thick, red]  (5,2.5) -- (4,4);
\draw  [ultra thick, red]  (3,2.5) -- (2.5,4);
\draw  [ultra thick, red]  (5,2.5) -- (5.5,4);
\draw  [ultra thick, red]  (2.5,4) -- (4,4);
\draw  [ultra thick, red]  (5.5,4) -- (4,4);
\draw  [ultra thick, red]  (5.5,1) -- (7,1);
\draw  [ultra thick, red]  (8.5,1) -- (7,1);
\draw  [ultra thick, red]  (5.5,1) -- (6,2.5);
\draw  [ultra thick, red]  (8.5,1) -- (8,2.5);
\draw  [ultra thick, red]  (6,2.5) -- (7,1);
\draw  [ultra thick, red]  (8,2.5) -- (7,1);
\draw  [ultra thick, red]  (6,2.5) -- (7,4);
\draw  [ultra thick, red]  (8,2.5) -- (7,4);
\draw  [ultra thick, red]  (6,2.5) -- (5.5,4);
\draw  [ultra thick, red]  (8,2.5) -- (8.5,4);
\draw  [ultra thick, red]  (5.5,4) -- (7,4);
\draw  [ultra thick, red]  (8.5,4) -- (7,4);
\filldraw [line width=2pt, red] (1,1) circle[radius=0.1cm];
\filldraw [line width=2pt, red] (4,1) circle[radius=0.1cm];
\filldraw [line width=2pt, red] (7,1) circle[radius=0.1cm];
\filldraw [line width=2pt, red] (1,4) circle[radius=0.1cm];
\filldraw [line width=2pt, red] (4,4) circle[radius=0.1cm];
\filldraw [line width=2pt, red] (7,4) circle[radius=0.1cm];
\filldraw [line width=2pt, black] (0,2.5) circle[radius=0.1cm];
\filldraw [line width=2pt, black] (2,2.5) circle[radius=0.1cm];
\filldraw [line width=2pt, black] (3,2.5) circle[radius=0.1cm];
\filldraw [line width=2pt, black] (5,2.5) circle[radius=0.1cm];
\filldraw [line width=2pt, black] (6,2.5) circle[radius=0.1cm];
\filldraw [line width=2pt, black] (8,2.5) circle[radius=0.1cm];
\filldraw [line width=2pt, black] (-0.5,1) circle[radius=0.1cm];
\filldraw [line width=2pt, black] (2.5,1) circle[radius=0.1cm];
\filldraw [line width=2pt, black] (5.5,1) circle[radius=0.1cm];
\filldraw [line width=2pt, black] (8.5,1) circle[radius=0.1cm];
\filldraw [line width=2pt, black] (-0.5,4) circle[radius=0.1cm];
\filldraw [line width=2pt, black] (2.5,4) circle[radius=0.1cm];
\filldraw [line width=2pt, black] (5.5,4) circle[radius=0.1cm];
\filldraw [line width=2pt, black] (8.5,4) circle[radius=0.1cm];
\node  at (-0.8,3.8) {$A$};
\node  at (8.8,3.8) {$B$};
\node  at (-0.8,1.2) {$C$};
\node  at (8.8,1.2) {$D$};
\end{tikzpicture}
}
\end{center}
\caption{1-skeleton of the polyhedron $P_n$, case $n=3$.} \label{fig3}
\end{figure}
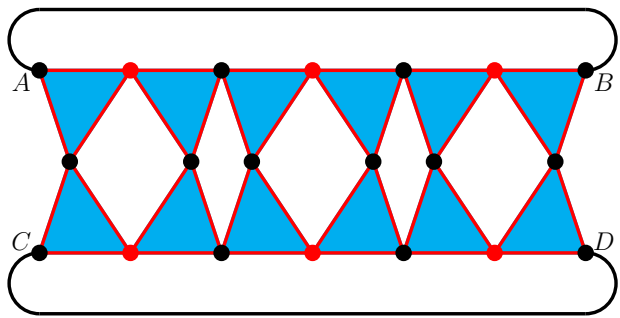

At
 the last step, in order  to obtain the polyhedron $P_n,$ we shall compress  black edges which connect two vertices of valence three, to obtain a new vertex of valence four. There are $4n$ such edges which give us $2n$ black vertices, as presented in Figure~\ref{fig3} (to complete the construction, it is necessary to identify the vertices $A$ and $B,$ as well as the vertices $C$ and $D$).  As a result, $P_n$ is an ideal right-angled polyhedron with $6n$ vertices, where $2n$ red vertices  correspond to bowties and $4n$ black vertices appeared after compressing  black edges.   Moreover, $P_n$ has two $2n$-gonal faces as its top and bottom, $2n$ triangles incident to the top, $2n$ triangles incident to the bottom, and $4n$ quadrilaterals on the middle level.  By cutting $P_n$ along the middle line passing through quadrilaterals, we shall see that $P_n$ can be decomposed into two identical ideal right-angled $2n$-gonal antiprisms $A_{2n}$. Recall that $A_n$ has $2n+2$ faces, where two $n$-gonal faces can be considered as the top and the bottom, and $2n$ triangular faces on the lateral surface, see Figure~\ref{fig4} for the antprism~$A_4$.   
 
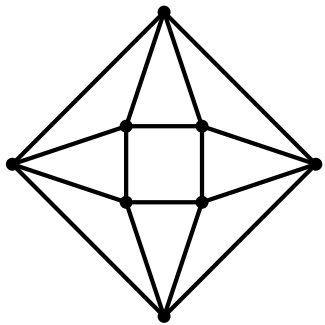
\begin{figure}[ht]
\begin{center}
\unitlength=.1mm
\begin{tikzpicture}[scale=0.5] 
\filldraw [line width=2pt, black] (4,5) circle[radius=0.1cm];
\filldraw [line width=2pt, black] (12,5) circle[radius=0.1cm];
\filldraw [line width=2pt, black] (8,1) circle[radius=0.1cm];
\filldraw [line width=2pt, black] (8,9) circle[radius=0.1cm];
\draw [ultra thick, black] (4,5)-- (8,9) -- (12,5) -- (8,1) -- (4,5);
\filldraw [line width=2pt, black] (7,4) circle[radius=0.1cm];
\filldraw [line width=2pt, black] (9,4) circle[radius=0.1cm];
\filldraw [line width=2pt, black] (7,6) circle[radius=0.1cm];
\filldraw [line width=2pt, black] (9,6) circle[radius=0.1cm];
\draw [ultra thick, black] (7,4)-- (7,6) -- (9,6) -- (9,4) -- (7,4);
\draw [ultra thick, black] (7,4)-- (4,5) -- (7,6) -- (8,9) --(9,6) -- (12,5) -- (9,4) -- (8,1) -- (7,4);
\end{tikzpicture}
\end{center}
\caption{1-skeleton of antiprism $A_4$.} \label{fig4}
\end{figure}

Volumes of right-angled antiprisms
 $A_n$, $n \geq 3,$
 are given by the following formula obtained by 
Thurston~\cite[Example~6.8.7]{Th80}, 
where an antiprism was named a \emph{drum with triangles}, 
 \begin{equation}
\operatorname{vol} (A_n) = 2n \left[ \Lambda \left( \frac{\pi}{4} + \frac{\pi}{2n} \right) + \Lambda \left( \frac{\pi}{4} - \frac{\pi}{2n} \right) \right].  \label{eq1}
\end{equation}
Here,
 $\Lambda(\theta)$ is the Lobachevsky function\index{Lobachevsky function} defined in~\cite[Chapter~7]{Th80} as 
$$
\Lambda (\theta) = - \int_0^{\theta} \log | 2 \sin (t) | dt.
$$
In particular,  up to six digits, we have the following volumes of ideal right-angled antiprisms: 
$$\begin{array}{l}
\operatorname{vol}(A_3) = 3.663863, \cr \operatorname{vol}(A_4)  = 6.023046, \cr \operatorname{vol}(A_5) = 8.137885, \cr \operatorname{vol}(A_6) = 10.149416.
\end{array}
$$ 

\begin{theorem} \label{th1}
For every $n \geq 2,$ the following formula holds:
$$
\operatorname{vol} (S^3 \setminus L_n) = 16 n   \left[ \Lambda \left( \frac{\pi}{4} + \frac{\pi}{4n} \right) + \Lambda \left( \frac{\pi}{4} - \frac{\pi}{4n} \right) \right].
$$
\end{theorem}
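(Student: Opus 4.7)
The plan is to assemble the volume formula by combining the polyhedral decomposition sketched above with Thurston's formula~\eqref{eq1}. First, I would invoke the Agol--Thurston construction from Lackenby~\cite{La04}, applied to the augmented link $L_n$ with its $2n$ vertical components, to certify the decomposition $S^3 \setminus L_n = P_n \cup P_n'$ into two identical ideal polyhedra with faces identified in pairs; this immediately gives
$$
\operatorname{vol}(S^3 \setminus L_n) = 2\operatorname{vol}(P_n).
$$
The bowtie construction illustrated in Figure~\ref{fig2} together with the compression of the black edges depicted in Figure~\ref{fig3} fixes the combinatorial type of $P_n$, and one observes that this combinatorial type carries a $\mathbb{Z}/2$-symmetry exchanging its top and bottom $2n$-gonal faces.

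Next, I would verify that $P_n$ admits an ideal right-angled realization in $\mathbb{H}^3$ via the Andreev theorem~\cite{An70}, which reduces to a combinatorial check on the 1-skeleton of $P_n$. By the uniqueness clause of the same theorem, the reflective $\mathbb{Z}/2$-symmetry of the combinatorial type is realized by an isometric involution of the hyperbolic polyhedron, and its fixed set is a totally geodesic plane meeting each of the $4n$ middle quadrilaterals perpendicularly along its midline. Slicing $P_n$ along this plane produces two congruent pieces, each combinatorially an ideal $2n$-gonal antiprism, which by Andreev rigidity must each be isometric to the ideal right-angled antiprism $A_{2n}$. Consequently
$$
\operatorname{vol}(P_n) = 2\operatorname{vol}(A_{2n}),
$$
and substituting $2n$ in place of $n$ in~\eqref{eq1} yields
$$
\operatorname{vol}(A_{2n}) = 4n\left[\Lambda\left(\frac{\pi}{4} + \frac{\pi}{4n}\right) + \Lambda\left(\frac{\pi}{4} - \frac{\pi}{4n}\right)\right].
$$
Chaining the two identities $\operatorname{vol}(S^3 \setminus L_n) = 2\operatorname{vol}(P_n) = 4\operatorname{vol}(A_{2n})$ then produces the claimed formula.

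The step I expect to be most delicate is the justification that the middle slice through $P_n$ is genuinely geometric rather than merely combinatorial, so that the two halves are isometric copies of the right-angled antiprism $A_{2n}$. This is what feeds directly into Thurston's antiprism formula, and it rests on the uniqueness part of Andreev's theorem promoting the combinatorial reflective symmetry to a true hyperbolic isometry with totally geodesic fixed plane; once this geometric symmetry is in hand, the rest of the argument is a routine substitution into~\eqref{eq1}.
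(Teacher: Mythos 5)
Your proposal is correct and follows essentially the same route as the paper: the Agol--Thurston decomposition of $S^3 \setminus L_n$ into two copies of $P_n$, the further cut of $P_n$ into two ideal right-angled antiprisms $A_{2n}$, and the substitution into Thurston's formula~(\ref{eq1}), yielding $\operatorname{vol}(S^3 \setminus L_n) = 2\operatorname{vol}(P_n) = 4\operatorname{vol}(A_{2n})$. The only difference is that you spell out, via the uniqueness clause of Andreev's theorem, why the middle slice of $P_n$ is a genuine geodesic cut rather than merely a combinatorial one --- a point the paper's proof leaves implicit in its preceding discussion.
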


\begin{proof}
Indeed, from the above considerations we obtain
$$
\operatorname{vol} (S^3 \setminus L_n) = 2 \operatorname{vol} (P_n) = 4 \operatorname{vol} (A_{2n}),
$$
so the assertion
 follows by formula~(\ref{eq1}). 
\end{proof}

The following  was proved by 
Adams~\cite[Corollary~5.1]{Ad85}. 
\begin{theorem}[see
 \cite{Ad85}] \label{th2}
Let $J$ be a link in $S^3$ such that $S^3 \setminus J$ is hyperbolic and $J$ has a projection for which some part appears as in Figure~\ref{fig5}(a). 
Let $J'$ be the link obtained by replacing that part by the projection of $J$ appearing in Figure~\ref{fig5}(a)  with
 the one
  appearing in Figure~\ref{fig5}(b). Then $S^3 \setminus J'$ is hyperbolic with the same volume as $S^3 \setminus J$. 
\end{theorem}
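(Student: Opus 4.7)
The plan is to reduce the statement to the rigidity of thrice-punctured spheres in hyperbolic $3$-manifolds, which is the engine behind all ``Adams moves.'' First I would identify in Figure~\ref{fig5}(a) the unknotted augmentation component which encircles two parallel strands of $J$. This component bounds a flat disk in $S^3$ that meets the other strands of $J$ in exactly two points, so after deleting $J$ this disk becomes a properly embedded thrice-punctured sphere $\Sigma \subset S^3 \setminus J$. I would then verify that $\Sigma$ is essential, i.e. incompressible and boundary-incompressible; this is a routine check, since a compressing or boundary-compressing disk would yield either a sphere or annulus violating the hyperbolicity of $S^3 \setminus J$.

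Next I would invoke the classical theorem of Adams that every essential thrice-punctured sphere in a complete finite-volume hyperbolic $3$-manifold is isotopic to an embedded totally geodesic surface, and that this totally geodesic representative is unique. After isotopy, I may therefore assume $\Sigma$ is totally geodesic. Cutting $S^3 \setminus J$ along $\Sigma$ yields a hyperbolic manifold $N$ with totally geodesic boundary consisting of two copies of the thrice-punctured sphere, and $\operatorname{vol}(S^3 \setminus J) = \operatorname{vol}(N)$. Because the complete hyperbolic structure on a thrice-punctured sphere is unique up to isometry, every self-homeomorphism $\phi$ of a thrice-punctured sphere is isotopic to an isometry. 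Hence for any gluing $\phi$ the re-glued manifold $N_\phi$ is hyperbolic with $\operatorname{vol}(N_\phi) = \operatorname{vol}(N)$.

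It then remains to realize the topological change from Figure~\ref{fig5}(a) to Figure~\ref{fig5}(b) as such a cut-and-re-glue operation along $\Sigma$. The difference between the two pictures is a modification of the two strands passing through the augmentation circle (typically a half-twist or reflection), and this modification is precisely the effect of re-gluing the two sides of $\Sigma$ by a non-trivial element of the mapping class group of the thrice-punctured sphere. Identifying $S^3 \setminus J'$ with $N_\phi$ for this $\phi$, I would conclude $\operatorname{vol}(S^3 \setminus J') = \operatorname{vol}(S^3 \setminus J)$ and that $S^3 \setminus J'$ is hyperbolic.

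The main obstacle I expect is the last bookkeeping step: one must check that the particular pictorial move in Figure~\ref{fig5} corresponds to an admissible self-homeomorphism of the thrice-punctured sphere (rather than to a diffeomorphism that fails to extend across the neighborhood of $\Sigma$), and that the resulting re-glued link complement is indeed homeomorphic to $S^3 \setminus J'$ and not to some other Dehn-surgered variant. The rigidity argument itself is standard once $\Sigma$ is in hand; the delicate part is translating between the diagrammatic move and the mapping-class-group element.
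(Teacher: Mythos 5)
The paper does not actually prove this statement: it is quoted from Adams \cite[Corollary~5.1]{Ad85}, so the only justification on record is the citation itself. Your sketch is a faithful reconstruction of Adams' original argument --- the augmentation circle in Figure~\ref{fig5}(a) spans an essential thrice-punctured sphere, which is isotoped to a totally geodesic representative, cut along, and reglued by an isometry of its unique complete hyperbolic structure --- and the two caveats you flag at the end (that the half-twist insertion of Figure~\ref{fig5}(b) is realized by an admissible mapping class of the thrice-punctured sphere, and that the reglued manifold really is $S^3\setminus J'$) are exactly the points verified in \cite{Ad85}, so there is no gap beyond the bookkeeping you already identified.
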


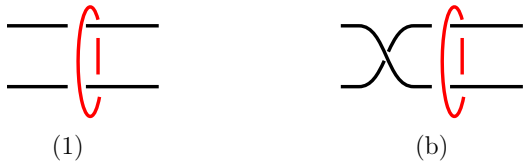
\begin{figure}[h]
\begin{center} 
\scalebox{0.8}{
\begin{tikzpicture} 
 \draw  [ultra thick, black]  (0.5,2) -- (1.5,2);
\draw  [ultra thick, black]  (0.5,1) -- (1.5,1);
\draw  [ultra thick, black]  (1.8,2) -- (3,2);
\draw  [ultra thick, black]  (1.8,1) -- (3,1);
\draw[ultra thick, red]  (2,2.1)  arc (50:320: 0.2cm and 0.9cm);
\draw  [ultra thick, red]  (2,1.2) -- (2,1.8);
\node  at (1.5,0) {(1)};
\pic[
 rotate=90,
 braid/.cd,
 every strand/.style={ultra thick},
 strand 1/.style={black},
 strand 2/.style={black}] 
at (6,1) {braid={s_1^{-1} }}; 
\draw  [ultra thick, black]  (7.8,2) -- (9,2);
\draw  [ultra thick, black]  (7.8,1) -- (9,1);
\draw[ultra thick, red]  (8,2.1)  arc (50:320: 0.2cm and 0.9cm);
\draw  [ultra thick, red]  (8,1.2) -- (8,1.8);
\node  at (7.5,0) {(b)};
 \end{tikzpicture}
}
\end{center}
\caption{Replacing the part of the projection by Theorem~\ref{th2}.} \label{fig5}
\end{figure}

We shall call the replacement presented in Figure~\ref{fig5} and its inverse
the  \emph{Adams moves}. 
Let us apply Adams moves to each of $2n$ parts of related  vertical components of the hyperbolic link $L_n$. The resulting link $L'_n$ with $3n$ components is 
depicted
 in Figure~\ref{fig6} for the case $n=3$. 

\begin{figure}[h]
\begin{center} 
\scalebox{0.8}{
\begin{tikzpicture} 
\draw  [ultra thick, black]  (0,0) -- (10.5,0);
\draw  [ultra thick, black]  (0,5) -- (10.5,5);
\draw[ultra thick, black] (0,1) arc (90:270:0.5);
\draw[ultra thick, black] (0,5) arc (90:270:0.5);
\draw[ultra thick, black] (10.5,1) arc (90:-90:0.5);
\draw[ultra thick, black] (10.5,5) arc (90:-90:0.5);
 \pic[
 rotate=90,
 braid/.cd,
 every strand/.style={ultra thick},
 strand 1/.style={black},
 strand 2/.style={blue}] 
at (0,1) {braid={s_1 }}; 
 \pic[
 rotate=90,
 braid/.cd,
 every strand/.style={ultra thick},
 strand 1/.style={blue},
 strand 2/.style={black}] 
at (0,3) {braid={s_1^{-1} }}; 
\draw[ultra thick, blue] (0,3) arc (90:270:0.5);
\draw  [ultra thick, black]  (1.8,2) -- (2.5,2);
\draw  [ultra thick, black]  (1.8,3) -- (2.5,3);
\draw[ultra thick, black] (2.5,3) arc (90:-90:0.5);
\draw  [ultra thick, blue]  (1.8,4) -- (4,4);
\draw  [ultra thick, blue]  (1.8,1) -- (4,1);
\draw[ultra thick, red]  (2,2.1)  arc (50:320: 0.2cm and 0.9cm);
\draw  [ultra thick, red]  (2,1.2) -- (2,1.8);
\draw[ultra thick, red]  (2,4.1)  arc (50:320: 0.2cm and 0.9cm);
\draw  [ultra thick, red]  (2,3.2) -- (2,3.8);
 \pic[
 rotate=90,
 braid/.cd,
 every strand/.style={ultra thick},
 strand 1/.style={blue},
 strand 2/.style={cyan}] 
at (4,1) {braid={s_1 }}; 
 \pic[
 rotate=90,
 braid/.cd,
 every strand/.style={ultra thick},
 strand 1/.style={cyan},
 strand 2/.style={blue}] 
at (4,3) {braid={s_1^{-1}}}; 
\draw[ultra thick, red]  (6,2.1)  arc (50:320: 0.2cm and 0.9cm);
\draw  [ultra thick, red]  (6,1.2) -- (6,1.8);
\draw[ultra thick, red]  (6,4.1)  arc (50:320: 0.2cm and 0.9cm);
\draw  [ultra thick, red]  (6,3.2) -- (6,3.8);
\draw[ultra thick, cyan] (4,3) arc (90:270:0.5);
\draw  [ultra thick, blue]  (5.8,2) -- (6.5,2);
\draw  [ultra thick, blue]  (5.8,3) -- (6.5,3);
\draw[ultra thick, blue] (6.5,3) arc (90:-90:0.5);
\draw  [ultra thick, cyan]  (5.8,4) -- (8,4);
\draw  [ultra thick, cyan]  (5.8,1) -- (8,1);
 \pic[
 rotate=90,
 braid/.cd,
 every strand/.style={ultra thick},
 strand 1/.style={black},
 strand 2/.style={cyan}] 
at (8,3) {braid={s_1^{-1} }}; 
 \pic[
 rotate=90,
 braid/.cd,
 every strand/.style={ultra thick},
 strand 1/.style={cyan},
 strand 2/.style={black}] 
at (8,1) {braid={s_1 }}; 
\draw[ultra thick, black] (8,3) arc (90:270:0.5);
\draw[ultra thick, cyan] (10.5,3) arc (90:-90:0.5);
\draw  [ultra thick, cyan]  (9.8,2) -- (10.5,2);
\draw  [ultra thick, cyan]  (9.8,3) -- (10.5,3);
\draw  [ultra thick, black]  (9.8,4) -- (10.5,4);
\draw  [ultra thick, black]  (9.8,1) -- (10.5,1);
\draw[ultra thick, red]  (10,2.1)  arc (50:320: 0.2cm and 0.9cm);
\draw  [ultra thick, red]  (10,1.2) -- (10,1.8);
\draw[ultra thick, red]  (10,4.1)  arc (50:320: 0.2cm and 0.9cm);
\draw  [ultra thick, red]  (10,3.2) -- (10,3.8);
\end{tikzpicture}
}
\end{center}
\caption{Link $L'_n$ with $3n$ components, case $n=3$.} \label{fig6}
\end{figure}
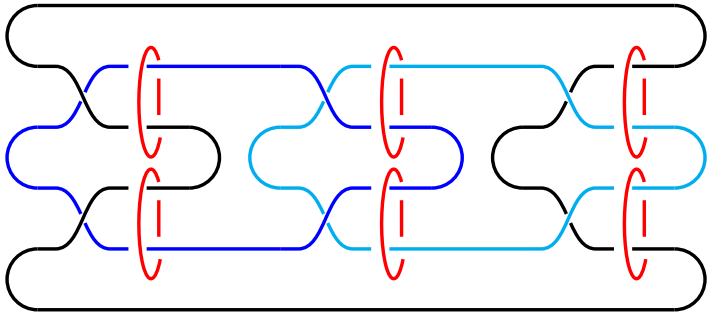

Theorems~\ref{th1} and~\ref{th2} now yield the following result. 
\begin{corollary} \label{cor}
For every $n \geq 2,$ the following formula holds:
\begin{equation}
\operatorname{vol} (S^3 \setminus L'_n) = 16 n   \left[ \Lambda \left( \frac{\pi}{4} + \frac{\pi}{4n} \right) + \Lambda \left( \frac{\pi}{4} - \frac{\pi}{4n} \right) \right]. 
\end{equation}
\end{corollary}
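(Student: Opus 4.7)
The plan is to deduce the corollary directly from Theorem~\ref{th1} together with iterated application of Theorem~\ref{th2}. Theorem~\ref{th1} already supplies the closed-form expression for $\operatorname{vol}(S^3 \setminus L_n)$, so the entire content of the corollary is the volume-preservation statement $\operatorname{vol}(S^3 \setminus L'_n) = \operatorname{vol}(S^3 \setminus L_n)$. The link $L'_n$ is, by its very construction in the paragraph following Theorem~\ref{th2}, obtained from $L_n$ by performing one Adams move at each of the $2n$ vertical (red) components.

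First I would verify that each of these $2n$ local replacements is indeed an instance of the pattern shown in Figure~\ref{fig5}(a): a small vertical red circle encircling two parallel strands, which after the move becomes a red circle encircling the two strands that now cross by a single $\sigma_1^{-1}$ generator as in Figure~\ref{fig5}(b). Inspection of Figure~\ref{fig1} versus Figure~\ref{fig6} confirms that the two horizontal strands threading through each red component play exactly the role of the two braid strands in Figure~\ref{fig5}, so the Adams move applies verbatim at each vertical component.

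Next I would iterate Theorem~\ref{th2}. Since $L_n$ is hyperbolic by Theorem~\ref{th1}, the first Adams move produces a hyperbolic link of equal volume. Applying the theorem inductively to the remaining $2n-1$ red components, each subsequent intermediate link is hyperbolic and has the same volume. After $2n$ steps we arrive at $L'_n$, which is therefore hyperbolic with
\[
\operatorname{vol}(S^3 \setminus L'_n) \;=\; \operatorname{vol}(S^3 \setminus L_n).
\]
Substituting the formula from Theorem~\ref{th1} yields the claimed identity.

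I do not anticipate a serious obstacle here, as the argument is essentially bookkeeping; the only point requiring any care is the verification that the hypotheses of Theorem~\ref{th2} remain satisfied at every intermediate stage, i.e.\ that each successive Adams move is still performed at a disjoint local tangle of the form in Figure~\ref{fig5}(a). This is clear because the $2n$ vertical components of $L_n$ are pairwise disjoint and each is supported in its own small ball that is unaffected by moves performed at the other components, so the $2n$ applications of Theorem~\ref{th2} can be carried out independently and in any order.
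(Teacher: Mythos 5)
Your argument is exactly the paper's: the paper derives the corollary by combining Theorem~\ref{th1} (the volume formula for $L_n$) with Theorem~\ref{th2} applied at each of the $2n$ vertical components, which is precisely your iterated-Adams-move bookkeeping. Your additional care about verifying the local pattern and the independence of the $2n$ disjoint moves is sound and simply makes explicit what the paper leaves implicit.
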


\section{A family of hyperbolic Brunnian links} \label{sec3} 

Suppose that $\overline{M}$ is a compact orientable 3-manifold with $\partial M$ a collection of tori, and that the interior $M \subset \overline{M}$ admits a complete hyperbolic structure. Then $M$ is referred to as a \emph{cusped manifold}. For any $n\geq 2,$ the complement $S^3 \setminus L'_n$ is a cusped manifold by construction. 

For every $n\geq 2$ and every positive integers $k_1, \ldots,  k_n,$ we define an $n$-component link $Br(k_1, \ldots, k_n)$ with a diagram having $2n$ blocks of twist regions as follows: Consider $n$ top blocks consisting of  $2k_i +1$ positive half-twists and $n$  bottom blocks  consisting of $2k_i + 1$ negative half-twists for $i=1, \ldots, n$.  A diagram of $Br(1,2,1)$ is presented in~Figure~\ref{fig7}. 
By
Rolfsen~\cite{Ro76}, 
 $Br(k_1, \ldots, k_n)$ can be considered as a result of Dehn fillings with slopes $1/k_i$ and $-1/k_i$ for $i=1, \ldots, n$ on $2n$ cusps corresponding to vertical components of $S^3 \setminus L'_n$ in Figure~\ref{fig6}.  Thus, in total we get $2n$ twist regions with $2k_i+1$ positive and negative half-twists, $i=1, \ldots, n$.   
\begin{figure}[h]
\begin{center} 
\scalebox{0.8}{
\begin{tikzpicture} 
\draw  [ultra thick, black]  (0,0) -- (15.5,0);
\draw  [ultra thick, black]  (0,5) -- (15.5,5);
\draw[ultra thick, black] (0,1) arc (90:270:0.5);
\draw[ultra thick, black] (0,5) arc (90:270:0.5);
\draw[ultra thick, black] (15.5,1) arc (90:-90:0.5);
\draw[ultra thick, black] (15.5,5) arc (90:-90:0.5);
 \pic[
 rotate=90,
 braid/.cd,
 every strand/.style={ultra thick},
 strand 1/.style={black},
 strand 2/.style={blue}] 
at (0,1) {braid={s_1 s_1 s_1}}; 
 \pic[
 rotate=90,
 braid/.cd,
 every strand/.style={ultra thick},
 strand 1/.style={blue},
 strand 2/.style={black}] 
at (0,3) {braid={s_1^{-1} s_1^{-1} s_1^{-1}}}; 
\node  at (1.75,0.5) {$k_1=1$};
\node  at (1.75,4.5) {$k_1=1$};
\draw[ultra thick, blue] (0,3) arc (90:270:0.5);
\draw[ultra thick, black] (3.5,3) arc (90:-90:0.5);
\draw  [ultra thick, blue]  (3.5,4) -- (5,4);
\draw  [ultra thick, blue]  (3.5,1) -- (5,1);
 \pic[
 rotate=90,
 braid/.cd,
 every strand/.style={ultra thick},
 strand 1/.style={blue},
 strand 2/.style={cyan}] 
at (5,1) {braid={s_1 s_1 s_1 s_1 s_1}}; 
 \pic[
 rotate=90,
 braid/.cd,
 every strand/.style={ultra thick},
 strand 1/.style={cyan},
 strand 2/.style={blue}] 
at (5,3) {braid={s_1^{-1} s_1^{-1} s_1^{-1}  s_1^{-1}  s_1^{-1}}}; 
\draw[ultra thick, cyan] (5,3) arc (90:270:0.5);
\draw[ultra thick, blue] (10.5,3) arc (90:-90:0.5);
\draw  [ultra thick, cyan]  (10.5,4) -- (12,4);
\draw  [ultra thick, cyan]  (10.5,1) -- (12,1);
\node  at (7.75,0.5) {$k_2=2$};
\node  at (7.75,4.5) {$k_2=2$};
 \pic[
 rotate=90,
 braid/.cd,
 every strand/.style={ultra thick},
 strand 1/.style={black},
 strand 2/.style={cyan}] 
at (12,3) {braid={s_1^{-1} s_1^{-1} s_1^{-1}}}; 
 \pic[
 rotate=90,
 braid/.cd,
 every strand/.style={ultra thick},
 strand 1/.style={cyan},
 strand 2/.style={black}] 
at (12,1) {braid={s_1 s_1 s_1}}; 
\draw[ultra thick, black] (12,3) arc (90:270:0.5);
\draw[ultra thick, cyan] (15.5,3) arc (90:-90:0.5);
\node  at (13.75,0.5) {$k_3=1$};
\node  at (13.75,4.5) {$k_3=1$};
\end{tikzpicture}
}
\end{center}
\caption{Link $Br(1,2,1)$.} \label{fig7}
\end{figure}
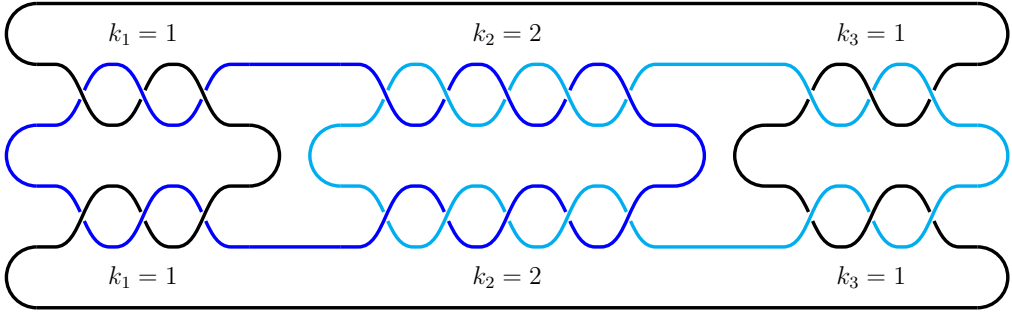

If all $k_i$ are equal to some $k$, then $Br(k, k, \ldots, k)$ admits a cyclic rotation symmetry of order $n$.  The links $Br(1,1, \ldots , 1)$ appeared
already 
 in Debrunner~\cite[Fig.~2]{De61} and were denoted by $L_F$. It was demonstrated
 in~\cite{De61} that $L_F$ is unsplittable, but any proper sublink of $L_F$ is completely splittable. The 5-component link presented in Rolfsen~\cite[p.~69]{Ro76} 
  is link $Br(1, 1, 1, 1, 1)$ in our notation. By the same argument as
   in~\cite{De61}, the following result holds. 

\begin{theorem} \label{th3}
For every $n,$ $Br(k_1,  \ldots, k_n)$ is an $n$-component Brunnian link. 
\end{theorem}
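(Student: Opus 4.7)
The plan is to verify the two defining conditions of a Brunnian link for $Br(k_1,\ldots,k_n)$: nontriviality of the whole link, and triviality of every proper sublink. I would follow Debrunner's argument from~\cite{De61}, adapting it from the case $k_i=1$ to arbitrary positive integer parameters, and exploit the fact that, by construction, $Br(k_1,\ldots,k_n)$ arises from the hyperbolic link $L'_n$ of Corollary~\ref{cor} by Dehn filling the $2n$ augmenting cusps with slopes $\pm 1/k_i$.

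For the triviality of the proper sublinks, it suffices to show that removing any single component $C_j$ produces the $(n-1)$-component unlink. In the diagram of Figure~\ref{fig7}, the strand $C_j$ appears in exactly four twist blocks, namely the top and bottom blocks shared with each neighboring component. After deleting $C_j$, each such block becomes a single strand with internal writhe, which isotopes trivially to an unknotted arc. The remaining $2(n-2)$ twist blocks occur in matched top/bottom pairs between surviving adjacent components, where the top block carries $+(2k_i+1)$ half-twists and the bottom block carries $-(2k_i+1)$ half-twists. Because the cyclic chain is now open at $C_j$, an endpoint component can be separated from its neighbor: the matched pair of twist blocks represents a trivial element of the 2-strand braid group, and with the endpoint free of further constraints a planar isotopy identifies this pair with an unlinked pair. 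Iterating along the opened chain unlinks all $n-1$ surviving components.

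For non-triviality of $Br(k_1,\ldots,k_n)$, all pairwise linking numbers vanish by the same top/bottom cancellation, so the obstruction must be of higher order. Following Debrunner, one can compute an appropriate Milnor $\bar\mu$-invariant with each of the $n$ component indices appearing; for the cyclic twist-block pattern this invariant is an odd integer whose magnitude tracks the product of the parities $2k_i+1$, and is therefore nonzero. Since all Milnor invariants vanish on the unlink, $Br(k_1,\ldots,k_n)$ cannot be trivial. The main obstacle will be justifying the cancellation isotopy rigorously for $n\geq 4$: while the case $n=3$ reproduces Debrunner's treatment of Borromean-type links essentially verbatim, a longer opened chain requires careful bookkeeping to ensure that unlinking one endpoint pair does not disturb the residual twist blocks further along the chain. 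The mechanism succeeds at each step because each $2k_i+1$ is odd, which is precisely the parity condition that underlies the Brunnian behavior of Debrunner's original family.
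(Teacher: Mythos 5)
Your treatment of the proper sublinks coincides with the paper's entire written proof: after deleting one component the cyclic chain opens, and the matched blocks of $+(2k_i+1)$ and $-(2k_i+1)$ half-twists between surviving neighbours are cancelled one pair at a time starting from the free end --- this is precisely the iterated ``simplifying underpassing move'' of Figure~\ref{fig8}. Your bookkeeping concern for $n\geq 4$ is resolved by the observation you already make, namely that one always cancels at the \emph{current} free end of the opened chain, so the residual blocks are never disturbed. One correction: the cancellation of a $+m$/$-m$ pair of half-twist blocks at a free end works for every integer $m$; the oddness of $2k_i+1$ is not what makes this isotopy succeed. Its role is elsewhere: an odd number of half-twists interchanges the two strands of a block, which is what makes the diagram close up into $n$ components rather than $2n$.

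The genuine gap is in the non-triviality half. The paper does not reprove this either (it appeals to ``the same argument as in~\cite{De61}'' just before the theorem), but your version replaces that appeal with an assertion rather than a proof: you claim that a length-$n$ Milnor invariant in which every component index appears is ``an odd integer whose magnitude tracks the product of the parities $2k_i+1$,'' with no computation. As literally stated this fails already for $n=2$: the only $\bar\mu$-invariant of length $2$ is the linking number, which vanishes here by your own top/bottom cancellation, so one must instead use a longer invariant with repeated indices (e.g.\ the Sato--Levine invariant $\bar\mu(1122)$). For $n\geq 3$ one must actually read off the longitude of one component as a word in the meridians of the others --- a product of commutators of powers determined by the twist blocks --- and verify that the relevant Magnus coefficient is a nonzero product of twisting exponents, before concluding that the link is not the unlink. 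Until that computation (or Debrunner's homomorphism argument for general $k_i$, which is what the paper actually invokes) is carried out, non-triviality is not established, and non-triviality is half of the Brunnian property.
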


\begin{proof}
It is easy to see that after removing one component of $Br(k_1,  \ldots, k_n),$  all other components can be transformed to trivial ones by a sequence of simplifying underpassing moves from (a) to (b) shown in Figure~\ref{fig8}. 
\begin{figure}[h]
\begin{center} 
\scalebox{0.8}{
\begin{tikzpicture} 
 \pic[
 rotate=90,
 braid/.cd,
 every strand/.style={ultra thick},
 strand 1/.style={black},
 strand 2/.style={blue}] 
at (0,1) {braid={s_1 }}; 
 \pic[
 rotate=90,
 braid/.cd,
 every strand/.style={ultra thick},
 strand 1/.style={blue},
 strand 2/.style={black}] 
at (0,3) {braid={s_1^{-1} }}; 
\draw[ultra thick, black] (1.5,3) arc (90:-90:0.5);
\draw[ultra thick, blue] (1.5,4) arc (90:-90:1.5);
\node  at (1,0) {(a)};
\draw  [ultra thick, blue]  (5,3) -- (5.5,3);
\draw  [ultra thick, blue]  (5,2) -- (5.5,2);
\draw[ultra thick, blue] (5.5,3) arc (90:-90:0.5);
\draw  [ultra thick, black]  (5,4) -- (5.5,4);
\draw  [ultra thick, black]  (5,1) -- (5.5,1);
\draw  [ultra thick, black]  (5.5,4) -- (6.5,3);
\draw  [ultra thick, black]  (6.5,3) -- (7,3);
\draw  [ultra thick, black]  (5.5,1) -- (6.5,2);
\draw  [ultra thick, black]  (6.5,2) -- (7,2);
\draw[ultra thick, black] (7,3) arc (90:-90:0.5);
\node  at (6,0) {(b)};
\end{tikzpicture}
}
\end{center}
\caption{The simplifying underpassing move.} \label{fig8}
\end{figure}
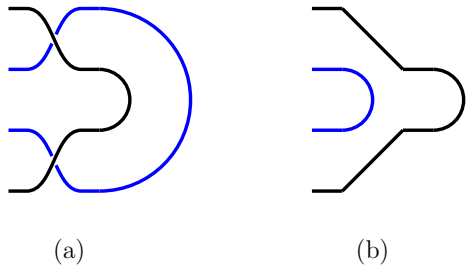
\end{proof}

A practical method to check the hyperbolicity of Brunnian links was presented by
Bai~\cite{Ba21}
who proved
that for $n\geq 2$ with $k_i=1$, $i=1, \ldots, n$, the link $Br(k_1, \ldots, k_n)$  is hyperbolic~\cite[Theorem~1.3]{Ba21}. The smallest one is a 2-component link $Br(1,1)$ with 12 crossings and $\operatorname{vol} (S^3 \setminus Br(1,1)) = 12. 528922,$ up to six digits. Moreover, it can be recognized by using SnapPy~\cite{Snap} that  $Br(1,1) = L12n1180$, a non-alternating link with 12 components. At the same time, for $k_i \geq 3,$ twisted regions in the diagram of $Br(k_1, \ldots, k_n)$ have more than $6$ crossings. Therefore, the following result from Futer and Purcell~\cite[Theorem~1.7]{FP07} can be applied.

\begin{theorem}[see \cite{FP07}]
Let $K$ be a link in $S^3$ with a prime, twist-reduced diagram $D(K)$. Suppose that every twist region of $D(K)$ contains at least $6$ crossings and that each component of $K$ passes through at least $7$ twist regions (counted with their multiplicity). Then every non-trivial Dehn filling of all the components of $K$ is hyperbolic. 
\end{theorem}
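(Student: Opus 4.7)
The plan is to deduce the statement from the $6$-theorem of Agol and Lackenby by passing through the fully augmented link construction used in Section~\ref{sec2}. Given $K$ with diagram $D(K)$, I would first form the fully augmented link $J$ by encircling each twist region of $D(K)$ with an unknotted crossing circle, and then removing pairs of half-twists inside each region so that at most one crossing remains. The Agol--Thurston appendix to Lackenby~\cite{La04} decomposes $S^3 \setminus J$ into two identical ideal right-angled hyperbolic polyhedra; in particular $S^3\setminus J$ is hyperbolic with explicit and computable cusp geometry.

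Next I would reformulate the target manifold as a Dehn filling on $S^3\setminus J$. Each component of $K$ remains a cusp of $J$, and each twist region contributes an additional crossing-circle cusp. Performing $1/m_i$-Dehn filling on the $i$-th crossing circle, where $2m_i$ is the number of crossings removed there, recovers $S^3\setminus K$, and then any non-trivial Dehn filling on the cusps of $K$ produces the manifold in question. By the $6$-theorem it suffices to show that every filling slope --- the chosen $1/m_i$ on each crossing circle and every non-trivial slope chosen on each $K$-cusp --- has length greater than $6$ when measured in the Euclidean cusp cross-section inherited from $S^3\setminus J$. For the crossing-circle cusps, the cross-section is a rectangle of meridional length $2$, and a standard computation yields slope length at least $2\sqrt{1+m_i^2}$; since every twist region contains at least $6$ crossings, $m_i\geq 3$ and this exceeds $2\sqrt{10}>6$.

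The remaining and harder step is to bound below the slope lengths on the cusps of $K$ themselves. The hypothesis that each component of $K$ passes through at least $7$ twist regions enters precisely here: by tracing the cusp neighbourhood of such a component through the polyhedral decomposition, every passage through a twist region contributes a controlled number of unit-length segments to the cusp cross-section. I would argue --- this being the main obstacle --- that seven such contributions force the cusp torus to have Euclidean area large enough that every essential simple closed curve on it has length greater than $6$. The delicate point is that the contributions from different twist regions need not combine orthogonally, so a careful combinatorial accounting in the right-angled polyhedra, together with the prime and twist-reduced hypotheses on $D(K)$ to rule out pathological identifications, is required. Once this cusp-area estimate is in hand, the $6$-theorem immediately yields that every non-trivial Dehn filling on all components of $K$ is hyperbolic.
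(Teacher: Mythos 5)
First, a point of comparison: the paper does not prove this statement at all --- it is quoted as Theorem~1.7 of Futer and Purcell \cite{FP07} and used as a black box --- so there is no internal argument to measure your proposal against. What you have written is, in outline, a reconstruction of the strategy of \cite{FP07} itself: fully augment every twist region, use the Agol--Thurston decomposition of the fully augmented link complement into two ideal right-angled polyhedra from the appendix to \cite{La04} to control the maximal cusps explicitly, and then invoke the $6$-theorem of Agol and Lackenby. The crossing-circle half of the slope-length verification is the easy half, and your bound (length greater than $6$ for the slope $1/m_i$ as soon as $m_i \geq 3$, i.e.\ as soon as the twist region carries at least six crossings) agrees with \cite[Theorem~3.10]{FP07}.

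As a proof, however, the proposal is incomplete, and you have located the gap yourself: the slope-length estimate on the cusps of the components of $K$ is asserted, not established, and it is precisely the hard quantitative content of \cite[Theorem~3.10]{FP07}. The mechanism is that the white faces of the two right-angled polyhedra assemble into totally geodesic surfaces (the twice-punctured disks bounded by the crossing circles together with the pieces of the projection plane); these surfaces meet the cusp torus of a component of $K$ in geodesics that cut it into pieces, and each passage of the component through a twist region contributes a segment of length at least $1$ transverse to the meridian, so that a non-meridional slope on a component traversing $t$ twist regions has length at least $t \geq 7 > 6$. Your ``careful combinatorial accounting'' is exactly this missing argument, not a routine verification, and it is here (and in showing that the fully augmented link is hyperbolic with the stated circle-packing and polyhedral structure in the first place) that the prime and twist-reduced hypotheses are actually consumed. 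Two further points are elided: when a twist region retains a single half-twist, the complement of $K$ is recovered by cutting and regluing along a twice-punctured disk, which shears the crossing-circle cusp and changes which lattice vector represents the slope $1/m_i$, so the length bound must be checked for the sheared cusp as well; and the $6$-theorem concludes only that the filled manifold is irreducible, atoroidal, not Seifert fibered and has infinite word-hyperbolic fundamental group, so passing to ``hyperbolic'' requires an appeal to the geometrization theorem.
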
 

The relation between volumes of cusped manifold and hyperbolic manifolds obtained by Dehn filling is due to Gromov and Thurston~\cite[Theorem~6.5.6]{Th80}.

\begin{theorem} [see \cite{Th80}] \label{th4}
Suppose $M$ is a complete hyperbolic manifold of finite volume and that $N \neq M$ is a complete hyperbolic manifold obtained topologically by replacing certain cusps of $M$ by solid tori. Then $\operatorname{vol} (N) < \operatorname{vol} (M)$. 
\end{theorem}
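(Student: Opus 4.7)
My plan is to follow Thurston's approach via the Gromov simplicial norm and then to upgrade the resulting non-strict inequality to a strict one using the geometric rigidity of the hyperbolic structure. First, I would invoke Gromov's fundamental theorem, in Thurston's cusped version, that for every complete, finite-volume hyperbolic $3$-manifold $X$ one has
\begin{equation*}
  \operatorname{vol} (X) \;=\; v_3 \, \| X \|,
\end{equation*}
where $v_3$ denotes the volume of the regular ideal tetrahedron in $\mathbb H^3$ and $\| X \|$ the (relative, in the cusped case) simplicial volume. Applied to both $M$ and $N$, this reduces the desired inequality $\operatorname{vol} (N) < \operatorname{vol} (M)$ to the corresponding inequality on simplicial volumes.

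Next, I would establish the non-strict bound $\| N \| \le \| M \|$ by constructing a natural degree-one ``filling'' map from the compact core of $M$, with its boundary tori pinched along the prescribed filling slopes, onto $N$. Because this map sends the relative fundamental class to the fundamental class of $N$, any efficient relative cycle on $M$ pushes forward to a (possibly inefficient) cycle on $N$ of no greater $\ell^1$-norm, yielding $\| N \| \le \| M \|$, and hence $\operatorname{vol}(N) \le \operatorname{vol}(M)$.

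The main obstacle is the strictness of this inequality. To handle it I would invoke Thurston's hyperbolic Dehn surgery theorem to produce a real-analytic one-parameter family of (incomplete, cone-singular) hyperbolic structures on the interior of $M$ interpolating between the complete structure on $M$, with cone angle $0$ and genuine cusps, and the complete structure on $N$, with cone angle $2\pi$ and smooth fillings along the core geodesics. Applying the Schl\"afli formula along this family and differentiating the volume with respect to the cone angle gives a definite sign for the derivative, so the volume is strictly monotone along the deformation; comparing endpoints yields $\operatorname{vol} (N) < \operatorname{vol} (M)$. Alternatively, one may argue directly on the level of Gromov norms: represent a near-optimal relative cycle of $M$ by straight ideal geodesic simplices and observe that after filling, the tetrahedra accumulating at the filled cusp become degenerate and can be replaced by a cycle of strictly smaller $\ell^1$-norm, again producing the strict inequality.
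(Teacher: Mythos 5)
The paper does not prove this statement at all: it is imported as a black box from Thurston's notes (Theorem~6.5.6 of \cite{Th80}, the Gromov--Thurston Dehn filling volume inequality), and only its conclusion is used later in the proof of Theorem~\ref{th5}. So there is no internal proof to compare against; what you have written is an attempt to reconstruct Thurston's argument. Your first two steps are indeed the standard route: the proportionality $\operatorname{vol}(X)=v_3\|X\|$ (in its relative, cusped version), together with the degree-one collapsing map $\overline{M}\to N$ and the amenability of the boundary tori, correctly yields the non-strict inequality $\operatorname{vol}(N)\le\operatorname{vol}(M)$.

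The gap is exactly where you locate it, in the strictness, and neither of your two alternatives closes it as stated. For the Schl\"afli route, Thurston's hyperbolic Dehn surgery theorem only produces the family of cone structures for all but finitely many filling slopes on each cusp, whereas the theorem hypothesizes an \emph{arbitrary} hyperbolic filling $N$; the existence of a real-analytic path of cone-manifold structures from cone angle $0$ all the way to $2\pi$ joining $M$ to $N$ is a substantial theorem in its own right (Hodgson--Kerckhoff-type deformation theory, with hypotheses), not something you may simply ``invoke.'' For the Gromov-norm route, observing that the pushed-forward simplices near the filled cusp straighten to degenerate ones shows only that \emph{that particular} cycle computes $\operatorname{vol}(N)$ inefficiently; it does not produce a fundamental cycle for $N$ of $\ell^1$-norm at most $\|M\|-\delta$ for some $\delta>0$ independent of the efficiency parameter $\varepsilon$, which is what strictness requires. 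That quantitative step is precisely the delicate part of Thurston's proof (and of the later rigorous treatments via natural maps). For the purposes of this paper the theorem is correctly cited and used, so none of this affects Theorem~\ref{th5}; but as a standalone proof your sketch establishes only $\operatorname{vol}(N)\le\operatorname{vol}(M)$.
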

   
\begin{theorem} \label{th5}
For hyperbolic links $Br (k_1, \ldots, k_n),$ the following upper bound holds: 
$$
\operatorname{vol} (S^3 \setminus B(k_1, \ldots, k_n)) <  16 n   \left[ \Lambda \left( \frac{\pi}{4} + \frac{\pi}{4n} \right) + \Lambda \left( \frac{\pi}{4} - \frac{\pi}{4n} \right) \right].
$$
\end{theorem}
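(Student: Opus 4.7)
The plan is to recognize the theorem as a direct application of the Gromov--Thurston volume inequality (Theorem~\ref{th4}) to the cusped manifold $S^3 \setminus L'_n$, whose volume was computed in Corollary~\ref{cor}. The strategy has three steps.

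First, I would recall from the construction in Section~\ref{sec3} that $Br(k_1,\dots,k_n)$ is realized as Dehn surgery on the $2n$ vertical (augmenting) components of $L'_n$ with slopes $1/k_i$ on the top row and $-1/k_i$ on the bottom row. Equivalently, the complement $S^3 \setminus Br(k_1,\dots,k_n)$ is topologically obtained from the cusped hyperbolic manifold $S^3 \setminus L'_n$ by filling in $2n$ solid tori along those slopes, while leaving the $n$ remaining cusps (the horizontal components) untouched. Since $S^3 \setminus L'_n$ is hyperbolic with volume given by Corollary~\ref{cor}, and since the hypothesis of the theorem tells us that the filled manifold $S^3 \setminus Br(k_1,\dots,k_n)$ is also a complete hyperbolic manifold of finite volume, the setup of Theorem~\ref{th4} is fulfilled with $M = S^3 \setminus L'_n$ and $N = S^3 \setminus Br(k_1,\dots,k_n)$.

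Second, I would verify that $N \neq M$ so that the strict inequality of Theorem~\ref{th4} applies. This is immediate: the number of cusps of $N$ equals $n$, while $M$ has $3n$ cusps, so the two manifolds are not homeomorphic. (Alternatively, at least one filling slope $\pm 1/k_i$ is a genuine non-trivial slope, so the topology changes.) Applying Theorem~\ref{th4} then gives
\[
\operatorname{vol}\bigl(S^3 \setminus Br(k_1,\dots,k_n)\bigr) \;<\; \operatorname{vol}\bigl(S^3 \setminus L'_n\bigr).
\]
Substituting the value of $\operatorname{vol}(S^3 \setminus L'_n)$ from Corollary~\ref{cor} yields exactly the claimed bound.

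The only delicate point, and the main conceptual obstacle, is the identification of $S^3 \setminus Br(k_1,\dots,k_n)$ as the correct Dehn filling of $S^3 \setminus L'_n$. This relies on the classical interpretation (as in Rolfsen~\cite{Ro76}) of $1/k$ surgery on an unknotted augmentation circle around two parallel strands as the insertion of $2k$ extra half-twists into those strands. Because each vertical component of $L'_n$ encircles two strands on which the Adams move has already placed one half-twist, the $1/k_i$ surgery on the top cusp produces a block of $2k_i+1$ positive half-twists, and the $-1/k_i$ surgery on the corresponding bottom cusp produces a block of $2k_i+1$ negative half-twists, matching Figure~\ref{fig7} precisely. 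Once this Dehn-surgery description is in hand, the volume bound follows immediately from Theorem~\ref{th4} and Corollary~\ref{cor}.
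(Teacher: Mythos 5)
Your proposal is correct and follows the paper's proof exactly: identify $S^3 \setminus Br(k_1,\ldots,k_n)$ as the Dehn filling of the $2n$ vertical cusps of $S^3 \setminus L'_n$ with slopes $\pm 1/k_i$ (as the paper sets up via Rolfsen earlier in Section~\ref{sec3}), then combine Corollary~\ref{cor} with the strict volume decrease of Theorem~\ref{th4}. Your additional checks that $N \neq M$ and that the surgery slopes reproduce the $2k_i+1$ half-twist blocks are worthwhile elaborations of details the paper leaves implicit.
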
	   

\begin{proof}
Since $Br(k_1, \ldots, k_n)$ can be obtained by Dehn filling on $2n$ cusps of the hyperbolic link $L'_n$, the result follows by
Corollary~\ref{cor}
and
 Theorem~\ref{th4}.
\end{proof}

\begin{corollary}
For every $n \geq 2,$ the value $\beta_n = 16 n  \left[ \Lambda \left( \frac{\pi}{4} + \frac{\pi}{4n} \right) + \Lambda \left( \frac{\pi}{4} - \frac{\pi}{4n} \right) \right]$ is the limit point for volumes of hyperbolic Brunnian links with $n$ components.
\end{corollary}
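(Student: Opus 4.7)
The plan is to exhibit an explicit sequence of hyperbolic Brunnian $n$-component links whose volumes converge to $\beta_n$ from below. The natural candidates, furnished by the preceding constructions, are the diagonal family $Br(k,k,\ldots,k)$ as $k\to\infty$: by Theorem~\ref{th3} each of these is an $n$-component Brunnian link, and by the discussion preceding Theorem~\ref{th5} the complement $S^3 \setminus Br(k,\ldots,k)$ is obtained from the cusped hyperbolic manifold $S^3 \setminus L'_n$ (whose volume equals $\beta_n$ by Corollary~\ref{cor}) by Dehn filling on $2n$ cusps with slopes $\pm 1/k$.

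The core step invokes Thurston's hyperbolic Dehn surgery theorem: on each cusp of a finite-volume cusped hyperbolic 3-manifold, all but finitely many filling slopes yield hyperbolic manifolds, and as the Euclidean lengths of the filling slopes (measured in the intrinsic flat structures on the horocusp tori) tend to infinity, the resulting hyperbolic manifolds converge geometrically to the complete cusped structure; in particular their volumes converge to $\operatorname{vol}(S^3 \setminus L'_n)$. Since the meridian--longitude basis on each vertical cusp of $S^3 \setminus L'_n$ is fixed, the lengths of the slopes $\pm 1/k$ grow without bound as $k\to\infty$. Applying this simultaneously on each of the $2n$ filled cusps, I would conclude that $S^3 \setminus Br(k,\ldots,k)$ is hyperbolic for every sufficiently large $k$, and
$$
\lim_{k\to\infty} \operatorname{vol}\bigl(S^3 \setminus Br(k,\ldots,k)\bigr) \;=\; \operatorname{vol}\bigl(S^3 \setminus L'_n\bigr) \;=\; \beta_n.
$$

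Combined with the strict upper bound $\operatorname{vol}(S^3 \setminus Br(k,\ldots,k)) < \beta_n$ from Theorem~\ref{th5}, this exhibits an infinite sequence of hyperbolic Brunnian $n$-component link volumes that accumulate at $\beta_n$ while remaining distinct from it, which is exactly the assertion that $\beta_n$ is a limit point of such volumes. The main technical content is the convergence-of-volumes half of Thurston's Dehn surgery theorem; finiteness of the exceptional slope set on each cusp and strict monotonicity of the filled volumes come for free from Theorem~\ref{th4}, and the identification of the limit with $\beta_n$ is immediate from Corollary~\ref{cor}. No separate argument is needed to guarantee that infinitely many distinct volumes occur, since a sequence of numbers strictly below $\beta_n$ that converges to $\beta_n$ cannot be eventually constant.
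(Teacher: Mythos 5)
Your argument is correct and is essentially the proof the paper intends: the corollary is stated there without proof, as an immediate consequence of Theorem~\ref{th5} together with the volume-convergence half of Thurston's hyperbolic Dehn surgery theorem applied to the $\pm 1/k$ fillings of $S^3 \setminus L'_n$ that produce $Br(k,\ldots,k)$. You have simply made explicit the convergence and hyperbolicity-for-large-$k$ steps that the paper leaves implicit, and done so correctly.
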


We  conclude the chapter by the following open problems concerning hyperbolic Brunnian links. 

\begin{problem}
What is the smallest volume hyperbolic Brunnian link with $n$ components?
\end{problem}

It is well known that the link "Borromean rings" is arithmetic~\cite[Chapter~7]{Th80}. 

\begin{problem}
Which Brunnian links are arithmetic?  
\end{problem}

Also, the following interesting problem was formulated by Bai and Ma~\cite[Problem 7.0.8]{BM21}. 

\begin{problem}[see~\cite{BM21}]
Let $B(n)$ be the number of Brunnian links with $n$ or fewer crossings, $B_h(n)$ the number of hyperbolic Brunnian links with $n$ or fewer crossings, and denote
$$
\limsup_{n \to \infty} \frac{B_h(n)}{B(n)} = a, \qquad \liminf_{n \to \infty} \frac{B_h(n)}{B(n)} = b.
$$
Is then $a=b$, $b=0,$ or $a<1$?
\end{problem}

\subsection*{Acknowledgements}
We thank the editors and referees for comments and suggestions.

\end{document}